\newtheorem{theorem}{Theorem}
\newtheorem{lemma}[theorem]{Lemma}
\theoremstyle{definition}
\newtheorem{example}[theorem]{Example}
\DeclareMathOperator{\tr}{tr}
\title{Fillmore Theorem for integers
\footnote{{\bf Keywords:} Inverse problem, field, similarity, diagonal.
}
\footnote{{\bf Mathematics subject classification:} 15A83.}
 \footnote{Supported by the Spanish Ministerio de Ciencia y Tecnología MTM2015-68805-REDT.}}
\author{Alberto Borobia\\
\small Dpto. Matem\'{a}ticas, Universidad Nacional de Educaci\'on a Distancia (UNED), Spain\\
\small e-mail: $aborobia@mat.uned.es$ \\ \\
\small \url{}}
\date{}
\begin{document}

\maketitle

\begin{abstract}
Fillmore Theorem  says   that if $A$ is a nonscalar matrix  of order $n$ over a field $\mathbb{F}$ and  $\gamma_1,\cdots,\gamma_n\in \mathbb{F}$ are such that $\gamma_1+\cdots+\gamma_n=\tr A$, then there is a matrix $B$ similar to $A$ with diagonal $(\gamma_1,\cdots,\gamma_n)$. Fillmore proof works by induction  on the size of $A$ and implicitly provides an algorithm to construct   $B$.  We  develop an   explicit  and extremely simple  algorithm  that finish in two steps (two similarities), and with its help we  extend Fillmore Theorem to integers  (if $A$ is integer then we can require to $B$  to be  integer). 
\end{abstract}

\section{Introduction}

An \emph{inverse  problem} asks for the existence of a matrix with prescribed structural and spectral constraints. The following is an early inverse problem result  stated by Fillmore in 1969.

\begin{theorem}[]\label{FillmoreTheorem}
Let $A$ be a nonscalar matrix  of order $n$ over a field $\mathbb{F}$ and let   $\gamma_1,\ldots,\gamma_n\in  \mathbb{F}$ such that $\gamma_1+\cdots+\gamma_n= \tr A$. Then there is a matrix similar to $A$ with diagonal $(\gamma_1,\ldots,\gamma_n)$. 
\end{theorem}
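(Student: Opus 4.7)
The plan is to follow Fillmore's inductive route, whose engine is a one-entry prescription lemma.

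\emph{Step 1 (Key lemma).} I would first show that any nonscalar matrix $M\in\mathbb{F}^{n\times n}$ is similar to a matrix with $(1,1)$-entry equal to any prescribed $\gamma\in\mathbb{F}$. Replacing $M$ by $N=M-\gamma I$ (still nonscalar), it suffices to achieve $(1,1)$-entry $0$. Since $N$ is nonscalar, not every nonzero vector is an eigenvector of $N$: if $Nv=\lambda(v)v$ for all $v\neq 0$, evaluating on $v+w$ with $v,w$ linearly independent forces $\lambda(v)=\lambda(w)$, so $N$ would be scalar. Picking a non-eigenvector $v$, the pair $(v,Nv)$ is linearly independent; extending to a basis of $\mathbb{F}^n$, the first column of $N$ in this basis is $(0,1,0,\ldots,0)^T$, hence its $(1,1)$-entry is $0$, and $M=N+\gamma I$ realises the $(1,1)$-entry $\gamma$ in the same basis.

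\emph{Step 2 (Induction on $n$).} The case $n=2$ follows directly from the key lemma, since once the $(1,1)$-entry is $\gamma_1$ the trace condition forces the $(2,2)$-entry to be $\gamma_2$. For $n>2$, apply the key lemma to bring $A$ to the form
\[
A'=\begin{pmatrix}\gamma_1 & u^T\\ w & A''\end{pmatrix},\qquad \tr A''=\gamma_2+\cdots+\gamma_n.
\]
Assuming $A''$ is nonscalar, the inductive hypothesis yields $Q\in GL_{n-1}(\mathbb{F})$ with $Q^{-1}A''Q$ carrying diagonal $(\gamma_2,\ldots,\gamma_n)$; conjugating $A'$ by $\mathrm{diag}(1,Q)$ then produces the required matrix.

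\emph{Main obstacle.} The delicate point is the \emph{scalar-block trap}: the block $A''$ may turn out to be a scalar matrix $\mu I_{n-1}$, which kills the induction. If $\gamma_2=\cdots=\gamma_n=\mu$ the diagonal of $A'$ is already $(\gamma_1,\mu,\ldots,\mu)$ and nothing more is needed; otherwise I would exploit the remaining freedom---both in the choice of the non-eigenvector $v$ and in permuting the target tuple $(\gamma_1,\ldots,\gamma_n)$---to show that some admissible choice forces $A''$ to be nonscalar. Intuitively, the set of ``bad'' vectors $v$ producing a scalar block is a proper subvariety of $\mathbb{F}^n$ whenever $A$ is nonscalar, so a generic $v$ works. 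Making this rigorous is the main technical work, and turning it into an explicit, finite, integer-preserving procedure---ideally in only two similarities---is precisely what the paper sets out to improve upon.
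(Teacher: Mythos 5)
Your Step 1 is correct and is exactly the classical device (the basis $\{x, Ax-\gamma x, x_3,\ldots,x_n\}$ puts $\gamma$ in the $(1,1)$ position), and your identification of the \emph{scalar-block trap} is exactly the right diagnosis. But the proposal stops precisely where the proof has to do its work: you never prove that the trailing block $A''$ can be forced to be nonscalar, you only conjecture that a ``generic'' non-eigenvector $v$ does the job and defer the argument. That is a genuine gap, not a routine verification. Worse, the genericity heuristic is not sound as stated over an arbitrary field: the theorem must hold over finite fields such as $\mathbb{F}_2$, where ``the bad set is a proper subvariety, so a generic point avoids it'' fails because a proper subvariety can still contain every $\mathbb{F}$-rational point, and the finitely many admissible choices of $v$ (and of a permutation of the $\gamma_i$) might all be bad unless one actually exhibits a good one. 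Also note that in the leftover case $A''=\mu I_{n-1}$ with the $\gamma_i$ not all equal to $\mu$, permuting the targets does not obviously help, so the obstacle is not dispatched by the freedoms you list.

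The paper closes exactly this hole with a small explicit strengthening of your key lemma (its Lemma~\ref{Auxiliar}): after changing basis to $\{x, Ax-\gamma x, x_3,\ldots,x_n\}$, the matrix has first column $(\gamma,1,0,\ldots,0)^T$; one further conjugation by a matrix that differs from the identity only in the $(1,3)$ entry leaves the $(1,1)$ entry equal to $\gamma$ (because the entries $\alpha_{31}=\cdots=\alpha_{n1}=0$) and plants a $1$ in the $(2,3)$ position, i.e.\ an off-diagonal $1$ inside the trailing $(n-1)\times(n-1)$ block. Hence that block is automatically nonscalar whenever $n\geq 3$, and the induction runs with no genericity argument and no case analysis on the $\gamma_i$. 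To repair your write-up you would need to prove such a statement (or an equivalent one guaranteeing a nonscalar complementary block), constructively and field-independently; as it stands, the induction step is not established.
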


The proof given in~\cite{Fillmore}  is by induction on the size of $A$ and implicitly provides  an   algorithm   to construct a matrix  similar to $A$ with   diagonal $(\gamma_1,\ldots,\gamma_n)$.  Tough the algorithm is elementary,   it requires  some tedious calculus for each induction step.  For completeness we will include this proof,  remaining as  faithful as possible to Fillmore presentation. As the original proof  has some inaccuracy then  we will incorporate  some modifications taken from Zhan~\cite[Theorem 1.5]{Zhan}.

\begin{lemma}\label{Auxiliar}
Let $A$ be a nonscalar matrix of order $n\geq 3$ over a field $\mathbb{F}$  and let $\gamma \in \mathbb{F}$. Then there is a nonsingular $P$ such that 
$
PAP^{-1}=\begin{bmatrix}
\gamma & * \\ 
* & A_1
\end{bmatrix}
$
where  $A_1$ is  nonscalar  of order $n-1$.
\end{lemma}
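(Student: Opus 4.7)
The plan is to construct $P$ in two stages. First, I would conjugate $A$ so that its $(1,1)$-entry becomes $\gamma$, arranged in a way that also controls the first column. Second, if the resulting bottom-right block $A_1$ is still a scalar matrix, I would apply a small corrective similarity that breaks its scalar structure without disturbing the $(1,1)$-entry; the composition of the two similarities is the required $P$.

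For the first stage, because $A$ is nonscalar, not every nonzero vector can be an eigenvector of $A$, so there exists $v \in \mathbb{F}^n$ with $\{v, Av\}$ linearly independent. Set $w := Av - \gamma v$. Since $Av \notin \operatorname{span}(v)$ while $\gamma v \in \operatorname{span}(v)$, we have $w \notin \operatorname{span}(v)$, so $\{v, w\}$ is linearly independent. Extend to a basis of $\mathbb{F}^n$ with $v_1 = v$, $v_2 = w$, $v_3, \ldots, v_n$. In this basis $A v_1 = \gamma v_1 + v_2$, so the matrix of $A$ (call it $A'$) has first column $(\gamma, 1, 0, \ldots, 0)^T$. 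Writing
\[ A' = \begin{bmatrix} \gamma & c^T \\ e_1 & A_1 \end{bmatrix}, \]
with $e_1$ the first standard basis vector of $\mathbb{F}^{n-1}$, this already has the desired shape provided $A_1$ is nonscalar; if so, we are done.

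Otherwise $A_1 = \lambda I_{n-1}$, and I would perform the second stage: conjugate $A'$ by $R := I_n + \alpha E_{1,3}$ for some $\alpha \in \mathbb{F}\setminus\{0\}$, using the hypothesis $n \geq 3$. Since $E_{1,3}^2 = 0$ we have $R^{-1} = I - \alpha E_{1,3}$, and a direct block computation shows two things: the $(1,1)$-entry of $R A' R^{-1}$ equals $\gamma$ because the $(3,1)$-entry of $A'$ is zero, and the bottom-right block of $R A' R^{-1}$ becomes $\lambda I_{n-1} - \alpha e_1 e_2^T$, which is nonscalar for any $\alpha \neq 0$.

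The main obstacle I anticipate is reconciling the two stages. Naively one might hope to correct $A_1$ by a block-diagonal conjugation of the form $\operatorname{diag}(1, S)$, but any such similarity sends the scalar $\lambda I_{n-1}$ back to itself and is therefore useless. Allowing the similarity to mix the first row/column with the rest risks destroying the painstakingly obtained $(1,1)$-entry. The key observation is that, because the first column of $A'$ after Stage 1 has zeros below row $2$, conjugation by $I + \alpha E_{1,j}$ with $j \geq 3$ automatically preserves the $(1,1)$-entry---and such a $j$ exists precisely because $n \geq 3$.
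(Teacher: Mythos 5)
Your proof is correct and follows essentially the same route as the paper: pass to the basis $\{x,\,Ax-\gamma x,\,x_3,\dots,x_n\}$ to get first column $(\gamma,1,0,\dots,0)^T$, then conjugate by an elementary matrix $I+\alpha E_{1,3}$, using the zero $(3,1)$ entry to keep the $(1,1)$ entry equal to $\gamma$ while creating a nonzero off-diagonal entry in the trailing block. The only cosmetic difference is that the paper applies the second conjugation unconditionally (choosing the coefficient so the $(2,3)$ entry becomes exactly $1$), whereas you invoke it only when the trailing block happens to be scalar, which is equally valid.
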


\begin{proof}
Since $A$ is nonscalar, there is a vector $x$ such that $x$ and $Ax$ are linearly independent.  Such a vector $x$ can be taken to be a standard basis vector if $A$ is not diagonal, or the sum of a pair of standard basis vectors otherwise.  Let $\{x,Ax-\gamma x,x_3,\ldots,x_n\}$ be a basis of $\mathbb{F}^{n}$. If $(\alpha_{ij})_{i,j=1}^n$ is the matrix of $A$ in this basis, then $\alpha_{11}=\gamma$, $\alpha_{21}=1$ and $\alpha_{31}=\cdots=\alpha_{n1}=0$. Let  $P=(p_{ij})_{i,j=1}^n$ be equal to the  identity except on its entry $p_{13}=1-\alpha_{23}$. Then $PAP^{-1}$ has $\gamma$ on its $(1,1)$ entry  and $1$ on its $(2,3)$ entry.
\end{proof}

\noindent {\bf Proof of  Fillmore Theorem:} 
The proof is by induction on the size of $A$. The conclusion of Lemma~\ref{Auxiliar} allows it to be applied repeatedly, leaving the case $n=2$ of the theorem for consideration. Again let $x$ be a vector such that $x$ and $Ax$ are linearly independent. Then $\{x,Ax-\gamma x\}$ form a basis, and in this basis the diagonal of $A$ is  
$(\gamma,\tr A -\gamma)$. 
$\square$

\bigskip

\section{Alternative algorithm}\label{AlternativeProcedure}

We present a  two steps algorithm  that minimizes the required computation. Namely, it starts with $A$ and performs two  similarities to reach a matrix with diagonal $(\gamma_1,\ldots,\gamma_n)$. On each step the matrix that performs the similarity differs from the identity by one line (row or column). Some  results, which can be demonstrated by routine check, are necessary.

\begin{lemma}\label{DiagonalMatrix}
Let $A=(a_{ij})_{i,j=1}^{n}$ be a nonscalar diagonal matrix  over a field $\mathbb{F}$, let $s$  such that  $ a_{11}\neq a_{ss}$, and let $B=(b_{ij})_{i,j=1}^{n}$ be equal to the  identity except on its entry   $b_{1s}=\frac{1}{a_{ss}-a_{11}}$. Then  $BAB^{-1}$ is equal to $A$ except on its entry $(1,s)$ that is equal to 1. 
\end{lemma}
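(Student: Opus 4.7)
The plan is to perform the conjugation $BAB^{-1}$ explicitly by writing $B$ as a rank-one perturbation of the identity, and exploiting the fact that $A$ is diagonal.

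First I would set $c=\frac{1}{a_{ss}-a_{11}}$ and write $B=I+cE_{1s}$, where $E_{1s}$ is the matrix unit with a $1$ in position $(1,s)$ and zeros elsewhere. Since $s\neq 1$, we have $E_{1s}^{\,2}=0$, so $B$ is invertible with $B^{-1}=I-cE_{1s}$ (this matches the description of $B^{-1}$ as the identity with entry $-c$ in position $(1,s)$). This is the natural form to use because it reduces the computation of the similarity to multiplications involving $E_{1s}$, which act very transparently on a diagonal matrix.

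Next I would expand
\[
BAB^{-1}=(I+cE_{1s})A(I-cE_{1s})=A+c\,E_{1s}A-c\,AE_{1s}-c^{2}E_{1s}AE_{1s}.
\]
Because $A$ is diagonal, the products involving $E_{1s}$ collapse at once: $E_{1s}A=a_{ss}E_{1s}$, $AE_{1s}=a_{11}E_{1s}$, and $E_{1s}AE_{1s}=a_{ss}E_{1s}^{\,2}=0$. Substituting gives $BAB^{-1}=A+c(a_{ss}-a_{11})E_{1s}=A+E_{1s}$ by the choice of $c$. Since the $(1,s)$ entry of $A$ is $0$, the resulting matrix agrees with $A$ everywhere except at entry $(1,s)$, where it equals $1$, as claimed.

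There is no real obstacle here; the only thing to be careful about is the hypothesis $a_{11}\neq a_{ss}$, which is exactly what guarantees $c$ is defined, and the implicit fact $s\neq 1$ (forced by the same inequality), which is what makes $E_{1s}$ nilpotent and allows the clean description of $B^{-1}$ and the vanishing of the quadratic term.
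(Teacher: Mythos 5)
Your computation is correct and is precisely the ``routine check'' the paper alludes to (the paper gives no explicit proof of this lemma): writing $B=I+cE_{1s}$, using $s\neq 1$ to get $B^{-1}=I-cE_{1s}$, and collapsing the products via diagonality yields $BAB^{-1}=A+E_{1s}$. Nothing is missing.
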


\begin{lemma}\label{OffDiagonalNonzero}
Let $A=(a_{ij})_{i,j=1}^{n}$ be a non-diagonal matrix  over a field $\mathbb{F}$,  let  $r\neq s$ such that  $a_{rs}\neq 0$, and let $B=(b_{ij})_{i,j=1}^{n}$ be equal to the  identity except on its row $s$ where  $b_{sr}=0$, $b_{ss}=a_{rs}$, and $b_{sk}=a_{rk}-1$ for $k\notin\{r,s\}$. Then all off-diagonal entries of the row $r$ of $BAB^{-1}$ are equal to 1.
\end{lemma}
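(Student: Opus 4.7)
The plan is to carry out a direct computation of row $r$ of $BAB^{-1}$, organised into three observations, exploiting the fact that $B$ differs from the identity on only one row.

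First I would observe that since $r \neq s$, the $r$-th row of $B$ coincides with the $r$-th row of the identity, i.e.\ with $e_r^{\,T}$. Consequently left-multiplication by $B$ preserves row $r$, so row $r$ of $BAB^{-1}$ equals row $r$ of $AB^{-1}$, and it suffices to analyse $AB^{-1}$.

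Next I would write $B^{-1}$ explicitly. Because $B$ is identity outside row $s$, and $b_{ss}=a_{rs}\neq 0$ guarantees invertibility, the inverse $B^{-1}$ is also identity outside row $s$. The standard formula for inverting such a ``row-operation'' matrix (or a direct check that $BB^{-1}=I$) yields $(B^{-1})_{sr}=0$, $(B^{-1})_{ss}=1/a_{rs}$, and $(B^{-1})_{sk}=-(a_{rk}-1)/a_{rs}$ for $k\notin\{r,s\}$.

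Finally, for any $j\neq r$ I would expand $(AB^{-1})_{rj}=\sum_{k}a_{rk}(B^{-1})_{kj}$. Since $B^{-1}$ agrees with the identity outside row $s$, only the indices $k=j$ (when $j\neq s$) and $k=s$ contribute. When $j=s$ this collapses to $a_{rs}\cdot(B^{-1})_{ss}=a_{rs}/a_{rs}=1$. When $j\notin\{r,s\}$ it collapses to $a_{rj}+a_{rs}\cdot(B^{-1})_{sj}=a_{rj}-(a_{rj}-1)=1$. This is exactly the desired conclusion.

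The only real obstacle is bookkeeping: the two exceptional column positions $r$ and $s$ in row $s$ of $B$ must be tracked separately, and one must remember that the $-1$ shift built into $b_{sk}=a_{rk}-1$ is precisely what produces the cancellation $a_{rj}-(a_{rj}-1)=1$ in the last step. Beyond that, the argument is the ``routine check'' that the author alludes to.
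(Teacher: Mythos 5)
Your computation is correct and is precisely the ``routine check'' the paper leaves to the reader: the formula for $B^{-1}$, the observation that $e_r^{\,T}B=e_r^{\,T}$ so only $AB^{-1}$ matters in row $r$, and the cancellation $a_{rj}-(a_{rj}-1)=1$ all check out. Since the paper supplies no explicit proof of this lemma, your argument is essentially the intended one and fills it in correctly.
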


\begin{lemma}\label{OffDiagonalOnes}
Let $A=(a_{ij})_{i,j=1}^{n}$ be a matrix  over a field $\mathbb{F}$ such that for some $r$ the off-diagonal entries of row $r$ are equal to 1,  let $\gamma_1,\ldots,\gamma_n\in \mathbb{F}$ with $\gamma_1+\cdots+\gamma_n=\tr A$, and let $B=(b_{ij})_{i,j=1}^{n}$ be equal to the  identity except on its column $r$ where  $b_{rr}=1$, and $b_{kr}=\gamma_k-a_{kk}$ for $k\neq r$. Then  the diagonal of $BAB^{-1}$ is  $(\gamma_1,\ldots,\gamma_n)$. 
\end{lemma}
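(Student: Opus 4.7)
The proof is a direct verification by computing $BAB^{-1}$ explicitly. My plan is to write $B$ in rank-one-update form and exploit a nilpotency to invert it cheaply.

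First I would observe that $B$ can be written as $B = I + c e_r^{T}$, where $c\in\mathbb{F}^{n}$ is the column vector whose $k$-th entry is $\gamma_k - a_{kk}$ for $k\neq r$ and whose $r$-th entry is $0$. Because $e_r^{T}c = c_r = 0$, the matrix $c e_r^{T}$ is nilpotent of index $2$, and hence
\[
B^{-1} = I - c e_r^{T}.
\]
From this the conjugation expands as
\[
BAB^{-1} = A + c(e_r^{T}A) - (Ac)e_r^{T} - c(e_r^{T}Ac)e_r^{T}.
\]

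Next I would read off the $(i,i)$ entry of each term. The last two terms carry a factor $(e_r^{T})_i = \delta_{ir}$, so they vanish on the diagonal whenever $i\neq r$. For $i\neq r$ the $(i,i)$ entry is therefore $a_{ii} + c_i\,a_{ri} = a_{ii} + (\gamma_i - a_{ii})\cdot 1 = \gamma_i$, where I use the hypothesis that the off-diagonal entries $a_{ri}$ of row $r$ equal $1$.

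For $i = r$ the $c_i a_{ri}$ contribution vanishes because $c_r = 0$, and the same reason kills the last term, leaving $(BAB^{-1})_{rr} = a_{rr} - (Ac)_r$. Here I would compute
\[
(Ac)_r = \sum_{k\neq r} a_{rk}(\gamma_k - a_{kk}) = \sum_{k\neq r}(\gamma_k - a_{kk}) = (\tr A - \gamma_r) - (\tr A - a_{rr}) = a_{rr} - \gamma_r,
\]
again using $a_{rk}=1$ for $k\neq r$ and, crucially, the trace constraint $\sum_i \gamma_i = \tr A$. This yields $(BAB^{-1})_{rr} = \gamma_r$, finishing the verification.

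The only non-routine step is the $i=r$ diagonal entry, which is exactly where both hypotheses (row-$r$ off-diagonal ones and the prescribed trace) have to be used simultaneously; the remaining diagonal entries fall out mechanically from the rank-one structure of $B-I$.
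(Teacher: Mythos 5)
Your proof is correct: the rank-one decomposition $B=I+ce_r^{T}$ with $e_r^{T}c=0$ gives $B^{-1}=I-ce_r^{T}$, and your diagonal computations (including the $i=r$ entry, where the trace hypothesis enters) all check out. The paper leaves this lemma as a ``routine check,'' and your argument is exactly that verification, just organized efficiently through the rank-one structure, so it matches the intended approach.
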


\medskip 

\noindent {\bf Proof of   Fillmore Theorem:}  If $A$ is  nondiagonal  then we construct a matrix similar to $A$  with  diagonal  $(\gamma_1,\ldots,\gamma_n)$ by the successive application of Lemma~\ref{OffDiagonalNonzero} and Lemma~\ref{OffDiagonalOnes}. If $A$ is diagonal  nonscalar then first  apply Lemma~\ref{DiagonalMatrix} to obtain a nondiagonal matrix.  $\square$

\begin{example} \label{Example}
We wish to construct a matrix with diagonal $(3,5,-2,6,-1)$ and similar to 
$$A=
\begin{pmatrix}
 4 & 0 & 4 & -3 & 5 \\
 2 & 3 & 0 & 2 & 3 \\
 0 & -2 & 2 & 5 & 4 \\
 7 & 1 & 3 & 4 & 0 \\
 2 & 5 & 3 & 0 & -2 \\
\end{pmatrix}
$$
The trace condition is satisfied. We  apply Lemma~\ref{OffDiagonalNonzero} to $A$ with respect to  $a_{34}=5$, so 
$$B_1=
\begin{pmatrix}
 1 & 0 & 0 & 0 & 0 \\
 0 & 1 & 0 & 0 & 0 \\
 0 & 0 & 1 & 0 & 0 \\
 -1 & -3 & 0 & 5 & 3 \\
 0 & 0 & 0 & 0 & 1 \\
\end{pmatrix}
\Longrightarrow B_1AB_1^{-1}=
\begin{pmatrix}
 \frac{17}{5} & -\frac{9}{5} & 4 & -\frac{3}{5} & \frac{34}{5} \\
 \frac{12}{5} & \frac{21}{5} & 0 & \frac{2}{5} & \frac{9}{5} \\
 1 & 1 & 2 & 1 & 1 \\
 \frac{172}{5} & \frac{106}{5} & 20 & \frac{17}{5} & -\frac{151}{5} \\
 2 & 5 & 3 & 0 & -2 \\
\end{pmatrix}
$$
And we apply Lemma~\ref{OffDiagonalOnes} to $B_1AB_1^{-1}$ with respect to its third row, so 
$$B_2=
\begin{pmatrix}
 1 & 0 & -\frac{2}{5} & 0 & 0 \\
 0 & 1 & \frac{4}{5} & 0 & 0 \\
 0 & 0 & 1 & 0 & 0 \\
 0 & 0 & \frac{13}{5} & 1 & 0 \\
 0 & 0 & 1 & 0 & 1 \\
\end{pmatrix}
\Longrightarrow B_2(B_1AB_1^{-1})B_2^{-1}=
\begin{pmatrix}
 3 & -\frac{11}{5} & \frac{59}{25} & -1 & \frac{32}{5} \\
 \frac{16}{5} & 5 & -\frac{171}{25} & \frac{6}{5} & \frac{13}{5} \\
 1 & 1 & -2 & 1 & 1 \\
 37 & \frac{119}{5} & \frac{824}{25} & 6 & -\frac{138}{5} \\
 3 & 6 & -\frac{1}{5} & 1 & -1 \\
\end{pmatrix} \quad \square
$$

\end{example}

\section{Fillmore Theorem for integers}\label{AlternativeProcedure}

In Example~\ref{Example}, if we start applying Lemma~\ref{OffDiagonalNonzero} to $A$ with respect to  $a_{42}=1$   and after that  we apply Lemma~\ref{OffDiagonalOnes} to the resulting matrix with respect to its fourth  row we obtain the sequence
$$
A\Longrightarrow 
\begin{pmatrix}
 4 & 0 & 4 & -3 & 5 \\
 60 & -6 & 37 & -6 & 37 \\
 12 & -2 & 6 & 5 & 2 \\
 1 & 1 & 1 & 4 & 1 \\
 -28 & 5 & -7 & 0 & 3 \\
\end{pmatrix}
\Longrightarrow 
\begin{pmatrix}
 3 & -1 & 3 & 47 & 4 \\
 71 & 5 & 48 & 630 & 48 \\
 4 & -10 & -2 & 47 & -6 \\
 1 & 1 & 1 & 6 & 1 \\
 -32 & 1 & -11 & -151 & -1 \\
\end{pmatrix}
$$
This last matrix is integer, similar to $A$, and has diagonal $(3,5,-2,6,-1)$. In order to  obtain an integer matrix  it was important that $A$ had an off-diagonal entry equal to 1.

\begin{lemma}\label{SimilarToEntry1}
Let $A=(a_{ij})_{i,j=1}^{n}$ be a nonscalar integer matrix. Then there is an integer matrix  similar to $A$ with  an off-diagonal entry equal to $1$.
\end{lemma}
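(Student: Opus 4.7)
The plan is to bring $A$, via two unimodular (integer) similarities, into a form from which a single diagonal rational conjugation produces the required $1$ without leaving $M_n(\mathbb{Z})$.

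First I would pick a primitive vector $v\in\mathbb{Z}^n$ such that $v$ and $Av$ are linearly independent over $\mathbb{Q}$: if $A$ is non-diagonal, some standard basis vector $e_i$ works (choose $i$ so that column $i$ of $A$ has a non-zero off-diagonal entry); if $A$ is diagonal and non-scalar, $v=e_i+e_j$ works for any pair with $a_{ii}\neq a_{jj}$. I extend $v$ to a $\mathbb{Z}$-basis $\{v,w_2,\ldots,w_n\}$ of $\mathbb{Z}^n$ (possible because $v$ is primitive) and let $P$ be the associated element of $GL_n(\mathbb{Z})$. Then $A_1:=P^{-1}AP\in M_n(\mathbb{Z})$ is similar to $A$, and its first column is the coordinate vector of $Av$ in the new basis, say $(\alpha,\beta_2,\ldots,\beta_n)^T$, with $(\beta_2,\ldots,\beta_n)\neq 0$ by the independence of $v$ and $Av$.

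Second, set $d:=\gcd(\beta_2,\ldots,\beta_n)\neq 0$. By Bezout there exists $Q\in GL_{n-1}(\mathbb{Z})$ with $Q(\beta_2,\ldots,\beta_n)^T=(d,0,\ldots,0)^T$, and conjugating $A_1$ by the block matrix $\left(\begin{smallmatrix} 1 & 0 \\ 0 & Q \end{smallmatrix}\right)\in GL_n(\mathbb{Z})$ produces an integer matrix $A_2$ similar to $A$ whose first column is exactly $(\alpha,d,0,\ldots,0)^T$.

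Finally, I conjugate by the rational diagonal matrix $D=\mathrm{diag}(d,1,\ldots,1)$. The matrix $DA_2D^{-1}$ differs from $A_2$ only by multiplying row $1$ by $d$ (which preserves integrality) and dividing column $1$ by $d$; the latter is harmless precisely because the entries of column $1$ are $\alpha,d,0,\ldots,0$, each divisible by $d$ outside the $(1,1)$ position. The $(2,1)$ entry becomes $d/d=1$, so $DA_2D^{-1}$ is an integer matrix similar to $A$ over $\mathbb{Q}$ with an off-diagonal entry equal to $1$.

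The main obstacle is that $\mathbb{Z}$-similarity alone cannot in general force an off-diagonal $1$: the gcd $d$ is a genuine invariant of the $GL_n(\mathbb{Z})$-orbit of the first column modulo its $e_1$-coordinate, so leaving the unimodular world in the final step is unavoidable. The whole point of the preparatory second step is to manufacture the zeros in column $1$ that make the rational rescaling $D(\cdot)D^{-1}$ land back in $M_n(\mathbb{Z})$.
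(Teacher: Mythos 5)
Your proof is correct: the choice of a primitive $v$ with $v$ and $Av$ independent, its completion to a $\mathbb{Z}$-basis, the $GL_{n-1}(\mathbb{Z})$ reduction of the off-diagonal part of the first column to $(d,0,\ldots,0)$, and the final conjugation by $D=\mathrm{diag}(d,1,\ldots,1)$ (integral precisely because of the zeros you created, and turning the $(2,1)$ entry into $1$) all check out. The underlying mechanism is the same as the paper's: concentrate the gcd of one line into a single off-diagonal entry by unimodular similarities, then scale it away by a non-unimodular diagonal similarity made harmless by those zeros. But your packaging is genuinely different. The paper works directly on the first row of $A$: it treats the diagonal case separately via Lemma~\ref{DiagonalMatrix}, assumes $a_{12}\neq 0$ otherwise, runs an explicit loop of $2\times 2$ Bezout conjugations (its Step~2) until the first row is $(a_{11},m,0,\ldots,0)$, and finishes with the conjugation $b_{11}=1/a_{12}$ (its Step~3, the analogue of your $D$). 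You instead reuse Fillmore's own device from Lemma~\ref{Auxiliar} --- a vector $v$ with $v,Av$ independent --- which handles the diagonal and nondiagonal cases uniformly, at the cost of one extra similarity (the basis change $P$) and of invoking two standard lattice facts (a primitive vector extends to a $\mathbb{Z}$-basis; $GL_{n-1}(\mathbb{Z})$ carries any integer vector to $(\gcd,0,\ldots,0)$) where the paper gives a hands-on iteration, in keeping with its stated aim of a fully explicit algorithm.

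One caveat about your closing aside: the gcd of the off-diagonal part of the first column is not an invariant of the full $GL_n(\mathbb{Z})$-conjugation orbit (a permutation similarity already changes which column sits first), so that justification is imprecise; the correct reason a rational similarity is unavoidable in general is a congruence obstruction, e.g.\ $A=\mathrm{diag}(0,2)$ is nonscalar, yet every $GL_2(\mathbb{Z})$-conjugate is congruent to the zero matrix modulo $2$ and hence has no entry equal to $1$. This does not affect the proof itself.
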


\begin{proof}
If $A$ is  diagonal then the result follows from Lemma~\ref{DiagonalMatrix}, and if $A$ is nondiagonal and has a nonzero entry equal to $1$ then there is nothing to prove. So suppose that  $A=(a_{ij})_{i,j=1}^{n}$ is  nondiagonal  with none of its off-diagonal entries equal to $1$. Without loss of generality assume that $a_{12}\neq0$. We develop a simple algorithm:

\begin{description}
\item[Step 1.] If $(a_{13},\ldots,a_{1n})\neq(0,\ldots,0)$ then go to Step 2. Otherwise go to Step 3.
\item[Step 2.]  Let $k$ be the minimum integer of $\{3,4,\ldots,n\}$ such that $a_{1k}\neq 0$. Let $m=gcd(a_{12},a_{1k})$,   $p=\frac{a_{12}}{m}$, and $q=\frac{a_{1k}}{m}$. As $p$ and $q$ are coprimes then let  $r$ and $s$ two integers such that $ps-qr=1$.   Let  $B=(b_{ij})_{i,j=1}^{n}$ be equal to the  identity except on its entries $b_{22}=p$, $b_{2k}=q$, $b_{k2}=r$, and $b_{kk}=s$. Then  $BAB^{-1}$ is integer and its first row is equal to $(a_{11},m,0,\ldots,0,a_{1,k+1},\ldots,a_{1n})$. If $m=1$ then we have finished. Otherwise do $A:=BAB^{-1}$ and go to Step 1.
\item[Step 3.] Let $B=(b_{ij})_{i,j=1}^{n}$ be equal to the  identity except on its entry  $b_{11}=1/a_{12}$. Then $BAB^{-1}$ is an integer matrix with first row $(a_{11},1,0,\ldots,0)$. And we have finished. 
\end{description}
\end{proof}

\begin{theorem}\label{FillmoreTheoremForIntegers}
Let $A$ be a nonscalar integer matrix  of order $n$  and let   $\gamma_1,\ldots,\gamma_n\in  \mathbb{Z}$ such that $\gamma_1+\cdots+\gamma_n= \tr A$. Then there is an integer matrix similar to $A$ with diagonal $(\gamma_1,\cdots,\gamma_n)$. \end{theorem}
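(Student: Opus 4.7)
The plan is to combine the three lemmas from Section~\ref{AlternativeProcedure} with the integer preparation provided by Lemma~\ref{SimilarToEntry1}. The two-step algorithm of Section~\ref{AlternativeProcedure} already does all the work over $\mathbb{F}$; the only issue when $\mathbb{F} = \mathbb{Z}$ is that the similarity matrices $B$ constructed in those lemmas must be unimodular (so that $B^{-1}$ is also integer). A quick inspection of Lemma~\ref{OffDiagonalNonzero} shows that $B$ differs from the identity only on row $s$, whose diagonal entry is $b_{ss} = a_{rs}$ and whose column $r$ entry is $0$; expanding along column $s$ gives $\det B = a_{rs}$. Similarly, in Lemma~\ref{OffDiagonalOnes} the matrix $B$ differs from the identity only on column $r$ with $b_{rr} = 1$, giving $\det B = 1$. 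Hence these similarities preserve the integer property exactly when, in the first step, the pivot entry $a_{rs}$ equals $\pm 1$.

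With that observation, the strategy is as follows. First, apply Lemma~\ref{SimilarToEntry1} to replace $A$ by an integer matrix $A' = PAP^{-1}$ which possesses an off-diagonal entry $a'_{rs} = 1$. Note that $A'$ is still nonscalar and $\tr A' = \tr A = \gamma_1 + \cdots + \gamma_n$. Second, apply Lemma~\ref{OffDiagonalNonzero} to $A'$ with pivot $a'_{rs} = 1$; the resulting similarity matrix $B_1$ is integer, has determinant $1$, and therefore $B_1 A' B_1^{-1}$ is an integer matrix in which every off-diagonal entry of row $r$ equals $1$. Third, apply Lemma~\ref{OffDiagonalOnes} to $B_1 A' B_1^{-1}$ with this same row $r$ and the targets $\gamma_1,\ldots,\gamma_n$; since $b_{kr} = \gamma_k - a_{kk} \in \mathbb{Z}$ and $\det B_2 = 1$, the matrix $B_2$ and its inverse are integer, so $B_2 B_1 A' B_1^{-1} B_2^{-1}$ is an integer matrix similar to $A$ with diagonal $(\gamma_1,\ldots,\gamma_n)$.

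The main (and essentially only) obstacle is the first step: without the guarantee of an off-diagonal $1$, Lemma~\ref{OffDiagonalNonzero} forces $\det B_1 = a_{rs}$, so $B_1^{-1}$ leaves $\mathbb{Z}$ whenever $|a_{rs}| > 1$, and the integer structure is lost. Lemma~\ref{SimilarToEntry1} is exactly what removes this obstacle, reducing the theorem to a verification that the two prescribed similarities are unimodular, which is the determinant computation already indicated above.
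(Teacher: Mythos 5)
Your argument is correct and follows the paper's own proof essentially verbatim: use Lemma~\ref{SimilarToEntry1} to produce an integer similar matrix with an off-diagonal entry equal to $1$, then apply Lemma~\ref{OffDiagonalNonzero} at that entry and Lemma~\ref{OffDiagonalOnes} at the resulting row, noting in each case that the transforming matrix is integer with determinant $1$ so integrality is preserved. The only addition you make is the explicit observation that $\det B = a_{rs}$ in Lemma~\ref{OffDiagonalNonzero}, which cleanly explains why the unit pivot is needed.
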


\begin{proof}
By Lemma~\ref{SimilarToEntry1} $A$ is similar to an integer matrix $A_1$  with  an off-diagonal $(r,s)$ entry  equal to 1. Applying   Lemma~\ref{OffDiagonalNonzero} to $A_1$ with respect to its $(r,s)$ entry  we obtain  a similar matrix $B_1A_1B_1^{-1}$ which is also integer (since $B_1$ is integer and $\det B_1=1$) and has  the off-diagonal entries of its row $r$ equal to 1. Applying  Lemma~\ref{OffDiagonalOnes} to $B_1A_1B_1^{-1}$   with respect to its  row $r$  then we obtain a  similar matrix $B_2(B_1A_1B_1^{-1})B_2^{-1}$ which is also integer (since $B_2$ is integer and $\det B_2=1$) and has diagonal $(\gamma_1,\cdots,\gamma_n)$.
\end{proof}

\bibliographystyle{plain}

\end{document}